\newtheorem{theorem}{Theorem}[section]
\theoremstyle{definition}
\theoremstyle{remark}
\numberwithin{equation}{section}
\begin{document}

\title{On the Main Equation of Inverse Sturm-Liouville Operator with Discontinuous Coefficient}


\author{Kh. R. Mamedov}
\address{Khanlar Residoglu Mamedov,
\newline\hphantom{iii} Science and Letter Faculty,
\newline\hphantom{iii} Mathematics Department,
\newline\hphantom{iii} Mersin University, 
\newline\hphantom{iii} 333343, Mersin, Turkey}
\curraddr{}
\email{hanlar@mersin.edu.tr}
\thanks{}

\author{D. Karahan}
\address{Done Karahan,
\newline\hphantom{iii} Science and Letters Faculty, 
\newline\hphantom{iii} Mathematics Department,
\newline\hphantom{iii} Harran University, 
\newline\hphantom{iii} Sanliurfa, Turkey}
\email{dkarahan@harran.edu.tr}
\curraddr{}
\email{}
\thanks{}

\subjclass[2010]{34A55; 34K10}

\keywords{main equation; discontinuous Sturm-Liouville operator; inverse problem; boundary value problem; spectral analysis. }

\date{}

\dedicatory{}

\begin{abstract}
In this work, a boundary value problem for Sturm-Liouville operator with discontinuous coefficient is examined. The main equation is obtained which has an important role in solution of inverse problem for boundary value problem and uniqueness of its solution is proved. Uniqueness theorem for the solution of the inverse problem is given.
\end{abstract}

\maketitle

\section{Introduction}

In many practices, spectral problems are faced for differential equations
which have discontinuous coefficient and discontinuity conditions in
interval (\cite{Tik-Sam}-\cite{Sed}). These problems generally emerge in
physics, mechanics and geophysics in non-homogeneous and discontinuous
environments.

We consider a heat problem in a rod which is composed of materials having
different densities. In the initial time, let the temperature is given
arbitrary. Let be the temperature is zero in one end of the rod and the heat
is isolated at the other end of the rod. In this case the heat flow in
non-homogeneous rod is expressed with the following boundary problem:

\begin{equation*}
\rho \left( x\right) \frac{\partial u}{\partial t}=\frac{\partial ^{2}u}{%
\partial x^{2}}+q\left( x\right) u,\text{ \ \ \ \ }0<x<\pi ,\text{ }t>0,
\end{equation*}%
\begin{equation*}
\left. \frac{\partial u}{\partial x}\right\vert _{x=0}=0,\text{ \ \ \ }%
\left. u\right\vert _{x=\pi }=0,\text{ \ \ }t>0,
\end{equation*}%
where $\rho \left( x\right) $, $q\left( x\right) $ are physical parameters
and have specific properties. For instance, $\rho \left( x\right) $ defines
the density of the material and piecewise-continuous function. Applying the
method of separation of variables to this problem, we get the spectral
problem below:

\begin{equation}
-y^{\prime \prime }+q(x)y=\lambda ^{2}\rho (x)y,\ 0\leq x\leq \pi   \label{1}
\end{equation}
\begin{equation}
y^{\prime }(0)=y(\pi )=0,  \label{2}
\end{equation}
here $q\left( x\right) \in L_{2}\left( 0,\pi \right) $ is a real-valued
function, $\rho \left( x\right) $ piecewise-continuous function the
following:

\begin{equation}
\rho (x)=\left\{ 
\begin{array}{c}
1,\ \ 0\leq x\leq a, \\ 
\alpha ^{2},\ a<x\leq \pi%
\end{array}%
\right.   \label{3}
\end{equation}
$\lambda $ is spectral parameter and $a(1+\alpha )>\pi \alpha $.

When $\rho \left( x\right) \equiv 1$ or $\alpha =1$, that is, in continuous
case, the solution of inverse problem is given in \cite{Mar1}-\cite{Car}. The
spectral properties of Sturm-Liouville opeator with discontinuos coefficient in different boundary conditions are examined in \cite{Nab-Ami}-\cite{Alt-Kad-Muk}.

In this study, the main equation is obtained which has an important role in
solution of inverse problem for boundary value problem and according to
spectral data, the uniqueness of solution of inverse problem is proved. Similar problems are examined for the equation (\ref{1}) with different boundary conditions in \cite{Akh-Hus2}.

It was proved (see \cite{Akh}), that the solution $\varphi (x,\lambda )$ of
the equation (\ref{1}) with initial conditions $\varphi (0,\lambda )=1,$ $%
\varphi ^{\prime }(0,\lambda )=0$ can be represented as

\begin{equation}
\varphi (x,\lambda )=\varphi _{0}(x,\lambda )+\int_{0}^{\mu
^{+}(x)}A(x,t)\cos \lambda tdt,  \label{4}
\end{equation}%
where $A(x,t)$ belongs to the space $L_{2,\rho }\left( 0,\pi \right) $ for
each fixed $x\in \lbrack 0,\pi ]$ and is related with the coefficient $q(x)$
of the equation (\ref{1}) by the formula:%
\begin{equation}
\frac{d}{dx}A(x,\mu ^{+}(x))=\frac{1}{4\sqrt{\rho (x)}}\left( 1+\frac{1}{%
\sqrt{\rho (x)}}\right) q(x),  \label{5}
\end{equation}%
\begin{equation}
\varphi _{0}(x,\lambda )=\frac{1}{2}\left( 1+\frac{1}{\sqrt{\rho (x)}}%
\right) \cos \lambda \mu ^{+}(x)+\frac{1}{2}\left( 1-\frac{1}{\sqrt{\rho (x)}%
}\right) \cos \lambda \mu ^{-}(x)  \label{6}
\end{equation}%
is the solution of (\ref{1}) when $q(x)\equiv 0,$%
\begin{equation}
\mu ^{+}(x)=\pm x\sqrt{\rho (x)}+a\left( 1\mp \sqrt{\rho (x)}\right)
\label{7}
\end{equation}%
It is similarly shown in \cite{Akh-Hus2}, \cite{Kar-Mam}  that the roots of the equation $\varphi (\pi ,\lambda )=0$ have the following form 
\begin{equation*}
\lambda _{n}=\lambda _{n}^{0}+\frac{d_{n}}{\lambda _{n}^{0}}+\frac{k_{n}}{n},%
\text{ }\lambda _{n}\geq 0,
\end{equation*}%
where $\left\{ \lambda _{n}^{0}\right\} ^{2}$ are the eigenvalues of problem
(\ref{1}), (\ref{2}) when $q(x)\equiv 0$, $d_{n}$ is a bounded sequence, $%
k_{n}\in l_{2}$ and norming constants:
\begin{equation*}
\alpha _{n}=\int_{0}^{\pi }\rho (x)\varphi ^{2}(x,\lambda _{n})dx.
\end{equation*}

\section{Main Equation}

\begin{theorem}
For each fixed $x\in \lbrack 0,\pi ]$ the kernel $A(x,t)$ from the
representation (\ref{4}) satisfies the following linear functional integral
equation 
\begin{equation}
\frac{2}{1+\sqrt{\rho (t)}}A\left( x,\mu ^{+}(t)\right) +\frac{1-\sqrt{\rho
(2a-t)}}{1+\sqrt{\rho (2a-t)}}A\left( x,2a-t\right) +  \notag
\end{equation}%
\begin{equation}
+F(x,t)+\int_{0}^{\mu ^{+}(x)}A(x,\xi )F_{0}(\xi ,t)d\xi =0,\text{ \ \ }0<t<x
\label{8}
\end{equation}%
where%
\begin{equation}
F_{0}(x,t)=\sum_{n=1}^{\infty }\left( \frac{\varphi _{0}(t,\lambda _{n})\cos
\lambda _{n}x}{\alpha _{n}}-\frac{\varphi _{0}(t,\lambda _{n}^{0})\cos
\lambda _{n}^{0}x}{\alpha _{n}^{0}}\right)  \label{9}
\end{equation}%
\begin{equation}
F(x,t)=\frac{1}{2}\left( 1+\frac{1}{\sqrt{\rho (x)}}\right) F_{0}(\mu
^{+}(x),t)+\frac{1}{2}\left( 1-\frac{1}{\sqrt{\rho (x)}}\right) F_{0}(\mu
^{-}(x),t)  \label{10}
\end{equation}%
$\left\{ \lambda _{n}^{0}\right\} ^{2}$ are eigenvalues and $\alpha _{n}^{0}$
are norming constants of the boundary value problem (\ref{1}), (\ref{2})
when $q(x)\equiv 0.$
\end{theorem}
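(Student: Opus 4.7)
\emph{Strategy.} I plan to derive (\ref{8}) by the Gelfand--Levitan method, starting from the fact that both $\{\varphi(\cdot,\lambda_n)/\sqrt{\alpha_n}\}$ and $\{\varphi_0(\cdot,\lambda_n^0)/\sqrt{\alpha_n^0}\}$ are complete orthonormal systems in $L_{2,\rho}(0,\pi)$, so for $0<t<x<\pi$
\begin{equation*}
\sum_n \frac{\varphi(x,\lambda_n)\varphi(t,\lambda_n)}{\alpha_n} \;=\; \sum_n \frac{\varphi_0(x,\lambda_n^0)\varphi_0(t,\lambda_n^0)}{\alpha_n^0}
\end{equation*}
(both represent $\delta(x-t)/\rho(t)$, which is zero off the diagonal). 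The strategy is to substitute the transformation representation (\ref{4}) into both $\varphi$-factors on the left and then use the two-cosine decomposition (\ref{6}) for every $\varphi_0$-factor that appears, so that each of the four terms of (\ref{8}) is recognized in turn.

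\emph{Extracting the terms.} After the substitution the ``pure-$\varphi_0$'' difference collapses directly to $F(x,t)$ via (\ref{9})--(\ref{10}); the single-$A$ term in the $x$-variable contributes $\int_0^{\mu^+(x)} A(x,\xi)\bigl[F_0(\xi,t)+G_0(\xi,t)\bigr]\,d\xi$, where $G_0(\xi,t):=\sum_n \cos\lambda_n^0\xi\cdot\varphi_0(t,\lambda_n^0)/\alpha_n^0$. Inverting (\ref{6}) gives $\cos\lambda\xi=\varphi_0(\xi,\lambda)$ for $\xi\in[0,a]$, and for $\xi\in[a,\mu^+(x)]$ (writing $\xi=\mu^+(x_\xi)$ so that $2a-\xi=\mu^-(x_\xi)$),
\begin{equation*}
\cos\lambda\xi \;=\; \frac{2\sqrt{\rho(x_\xi)}}{1+\sqrt{\rho(x_\xi)}}\,\varphi_0(x_\xi,\lambda) \;+\; \frac{1-\sqrt{\rho(x_\xi)}}{1+\sqrt{\rho(x_\xi)}}\,\varphi_0(2a-\xi,\lambda).
\end{equation*}
Applying the unperturbed completeness $\sum_n \varphi_0(y,\lambda_n^0)\varphi_0(t,\lambda_n^0)/\alpha_n^0 = \delta(y-t)/\rho(t)$, the kernel $G_0(\xi,t)$ becomes Dirac masses at $\xi=\mu^+(t)$ and $\xi=2a-t$; computing $\int_0^{\mu^+(x)} A(x,\xi)G_0(\xi,t)\,d\xi$ with the Jacobian $d\xi/dx_\xi=\sqrt{\rho(x_\xi)}$ and the weight $1/\rho(t)$ yields exactly the two boundary terms $\frac{2}{1+\sqrt{\rho(t)}}A(x,\mu^+(t))$ and $\frac{1-\sqrt{\rho(2a-t)}}{1+\sqrt{\rho(2a-t)}}A(x,2a-t)$ of (\ref{8}).

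\emph{Vanishing of the remaining terms.} The single-$A$-in-$t$ and double-$A$ contributions generated by the substitution combine into $\int_0^{\mu^+(t)} A(t,\tau)\,K(x,\tau)\,d\tau$, where $K(x,\tau):=\sum_n \varphi(x,\lambda_n)\cos\lambda_n\tau/\alpha_n$. A Volterra-type argument forces $K(x,\cdot)\equiv 0$ on $(0,\mu^+(x))$: starting from $\sum_n \varphi(x,\lambda_n)\varphi(\tau,\lambda_n)/\alpha_n=0$ for $\tau<x$, substituting (\ref{4}) into $\varphi(\tau,\lambda_n)$, and applying (\ref{6}) piece by piece (first on $\tau\in(0,a)$, where $c_-(\tau)=0$ decouples the system into a scalar Volterra equation with zero source, then on $\tau\in(a,x)$, where the already established vanishing on $(0,a)$ kills the $\mu^-(\tau)$-contribution and leaves another scalar Volterra equation with zero source). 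Since $\mu^+(t)<\mu^+(x)$ for $t<x$, this residual integral vanishes, and the three steps combined deliver (\ref{8}).

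\emph{Main obstacle.} The principal technical difficulty is the rigorous justification of the distributional calculus: the Parseval series converge only conditionally, and the extraction of Dirac masses from partial sums, as well as the interchange of summation with integration against $A(x,\xi)$, must be controlled. I would handle this by truncating at $|n|\leq N$, using the asymptotics $\lambda_n=\lambda_n^0+d_n/\lambda_n^0+k_n/n$ and the corresponding expansions of $\alpha_n$ recorded in the introduction to bound the $L_2$-norms of the remainders of $F$ and $F_0$, and then passing to the limit $N\to\infty$. Once this limit framework is in place, the formal manipulations above become literal identities on the open region $0<t<x<\pi$ and establish (\ref{8}).
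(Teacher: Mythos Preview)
Your overall strategy---substitute the representation (4) into the difference of the two spectral resolutions and identify each resulting term---is exactly the paper's, and your decomposition coincides with the paper's $\Phi_N = I_{N1}+I_{N2}+I_{N3}+I_{N4}$: the pure-$\varphi_0$ difference is $I_{N1}\to F$; the $\int A(x,\xi)G_0$ piece is $I_{N2}$ (the paper uses the same inversion formula for $\cos\lambda\xi$, written there as (16), to extract the two boundary $A$-terms); the $\int A(x,\xi)F_0$ piece is $I_{N3}$; and your residual $\int_0^{\mu^+(t)}A(t,\tau)K(x,\tau)\,d\tau$ is $I_{N4}$. The paper also works throughout with partial sums tested against $f(t)\rho(t)$ and passes to the limit, which is precisely the remedy you propose in your ``Main obstacle'' paragraph.

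The one genuine divergence is in how $I_{N4}$ is shown to vanish. The paper does \emph{not} argue that $K(x,\tau)=0$; instead it rewrites the partial sum as a contour integral over $|\lambda|=N$ via the identity $\varphi(x,\lambda_n)/(2\lambda_n\alpha_n)=\psi(x,\lambda_n)/\dot\Delta(\lambda_n)$ and the residue theorem, and then kills it using the asymptotics $\psi(x,\lambda)=O\bigl(|\lambda|^{-1}e^{|\mathrm{Im}\,\lambda|(\mu^+(\pi)-\mu^+(x))}\bigr)$, the lower bound $|\Delta(\lambda)|\ge C_\delta e^{|\mathrm{Im}\,\lambda|\mu^+(\pi)}$ on $G_\delta$, and a Riemann--Lebesgue estimate for $\int_0^{\mu^+(t)}A(t,\xi)\cos\lambda\xi\,d\xi$. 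Your Volterra route is a legitimate alternative and has the virtue of being purely real-variable (note that it implicitly uses the standing hypothesis $a(1+\alpha)>\pi\alpha$, which ensures $|\mu^-(\tau)|<a$ for all $\tau\le\pi$, so your two-step bootstrap closes). The trade-off is that to run it rigorously inside the truncation you must show that the source term $\sum_{n\le N}\varphi(x,\lambda_n)\varphi(\tau,\lambda_n)/\alpha_n$ tends to zero, for $\tau<x$, in a topology strong enough to survive the Volterra inversion; the paper's contour argument delivers the weak limit $I_{N4}\to 0$ directly and bypasses that step.
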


\begin{proof}
From (\ref{4}) we have%
\begin{equation}
\varphi _{0}(x,\lambda )=\varphi (x,\lambda )-\int_{0}^{\mu
^{+}(x)}A(x,t)\cos \lambda tdt.  \label{11}
\end{equation}%
It follows from (\ref{4}) and (\ref{11}) that%
\begin{equation*}
\sum_{n=1}^{N}\frac{\varphi (x,\lambda _{n})\varphi _{0}(t,\lambda _{n})}{%
\alpha _{n}}=\sum_{n=1}^{N}\left( \frac{\varphi _{0}(x,\lambda _{n})\varphi
_{0}(t,\lambda _{n})}{\alpha _{n}}+\right.
\end{equation*}%
\begin{equation*}
+\left. \frac{\varphi _{0}(t,\lambda _{n})}{\alpha _{n}}\int_{0}^{\mu
^{+}(x)}A(x,\xi )\cos \lambda _{n}\xi d\xi \right) =
\end{equation*}%
\begin{equation*}
=\sum_{n=1}^{N}\left( \frac{\varphi _{0}(x,\lambda _{n})\varphi
_{0}(t,\lambda _{n})}{\alpha _{n}}-\frac{\varphi _{0}(x,\lambda
_{n}^{0})\varphi _{0}(t,\lambda _{n}^{0})}{\alpha _{n}^{0}}\right) +
\end{equation*}%
\begin{equation*}
+\sum_{n=1}^{N}\frac{\varphi _{0}(x,\lambda _{n}^{0})\varphi _{0}(t,\lambda
_{n}^{0})}{\alpha _{n}^{0}}+
\end{equation*}%
\begin{equation*}
+\int_{0}^{\mu ^{+}(x)}A(x,\xi )\sum_{n=1}^{N}\left( \frac{\varphi
_{0}(t,\lambda _{n})\cos \lambda _{n}\xi }{\alpha _{n}}-\frac{\varphi
_{0}(t,\lambda _{n}^{0})\cos \lambda _{n}^{0}\xi }{\alpha _{n}^{0}}\right)
d\xi +
\end{equation*}%
\begin{equation*}
+\int_{0}^{\mu ^{+}(x)}A(x,\xi )\sum_{n=1}^{N}\frac{\varphi _{0}(t,\lambda
_{n}^{0})\cos \lambda _{n}^{0}\xi }{\alpha _{n}^{0}}d\xi
\end{equation*}%
\begin{equation*}
\sum_{n=1}^{N}\frac{\varphi (x,\lambda _{n})\varphi _{0}(t,\lambda _{n})}{%
\alpha _{n}}=\sum_{n=1}^{N}\frac{\varphi (x,\lambda _{n})\varphi (t,\lambda
_{n})}{\alpha _{n}}-
\end{equation*}%
\begin{equation*}
-\int_{0}^{\mu ^{+}(t)}A(t,\xi )\sum_{n=1}^{N}\frac{\varphi (x,\lambda
_{n})\cos \lambda _{n}\xi }{\alpha _{n}}d\xi .
\end{equation*}%
Using the last two equalities, we obtain%
\begin{equation*}
\sum_{n=1}^{N}\left( \frac{\varphi (x,\lambda _{n})\varphi (t,\lambda _{n})}{%
\alpha _{n}}-\frac{\varphi _{0}(x,\lambda _{n}^{0})\varphi _{0}(t,\lambda
_{n}^{0})}{\alpha _{n}^{0}}\right) =
\end{equation*}%
\begin{equation*}
=\sum_{n=1}^{N}\left( \frac{\varphi _{0}(x,\lambda _{n})\varphi
_{0}(t,\lambda _{n})}{\alpha _{n}}-\frac{\varphi _{0}(x,\lambda
_{n}^{0})\varphi _{0}(t,\lambda _{n}^{0})}{\alpha _{n}^{0}}\right) +
\end{equation*}%
\begin{equation*}
+\int_{0}^{\mu ^{+}(x)}A(x,\xi )\sum_{n=1}^{N}\frac{\varphi _{0}(t,\lambda
_{n}^{0})\cos \lambda _{n}^{0}\xi }{\alpha _{n}^{0}}d\xi +
\end{equation*}%
\begin{equation*}
+\int_{0}^{\mu ^{+}(x)}A(x,\xi )\sum_{n=1}^{N}\left( \frac{\varphi
_{0}(t,\lambda _{n})\cos \lambda _{n}\xi }{\alpha _{n}}-\frac{\varphi
_{0}(t,\lambda _{n}^{0})\cos \lambda _{n}^{0}\xi }{\alpha _{n}^{0}}\right)
d\xi +
\end{equation*}%
\begin{equation*}
+\int_{0}^{\mu ^{+}(t)}A(t,\xi )\sum_{n=1}^{N}\frac{\varphi (x,\lambda
_{n})\cos \lambda _{n}\xi }{\alpha _{n}}d\xi ,
\end{equation*}%
or%
\begin{equation}
\Phi _{N}(x,t)=I_{N1}(x,t)+I_{N2}(x,t)+I_{N3}(x,t)+I_{N4}(x,t),  \label{12}
\end{equation}%
where%
\begin{equation*}
\Phi _{N}(x,t):=\sum_{n=1}^{N}\left( \frac{\varphi (x,\lambda _{n})\varphi
(t,\lambda _{n})}{\alpha _{n}}-\frac{\varphi _{0}(x,\lambda _{n}^{0})\varphi
_{0}(t,\lambda _{n}^{0})}{\alpha _{n}^{0}}\right) ,
\end{equation*}%
\begin{equation*}
I_{N1}(x,t):=\sum_{n=1}^{N}\left( \frac{\varphi _{0}(x,\lambda _{n})\varphi
_{0}(t,\lambda _{n})}{\alpha _{n}}-\frac{\varphi _{0}(x,\lambda
_{n}^{0})\varphi _{0}(t,\lambda _{n}^{0})}{\alpha _{n}^{0}}\right) ,
\end{equation*}%
\begin{equation*}
I_{N2}(x,t):=\int_{0}^{\mu ^{+}(x)}A(x,\xi )\sum_{n=1}^{N}\frac{\varphi
_{0}(t,\lambda _{n}^{0})\cos \lambda _{n}^{0}\xi }{\alpha _{n}^{0}}d\xi ,
\end{equation*}%
\begin{equation*}
I_{N3}(x,t):=\int_{0}^{\mu ^{+}(x)}A(x,\xi )\sum_{n=1}^{N}\left( \frac{%
\varphi _{0}(t,\lambda _{n})\cos \lambda _{n}\xi }{\alpha _{n}}-\frac{%
\varphi _{0}(t,\lambda _{n}^{0})\cos \lambda _{n}^{0}\xi }{\alpha _{n}^{0}}%
\right) d\xi ,
\end{equation*}%
\begin{equation*}
I_{N4}(x,t):=\int_{0}^{\mu ^{+}(t)}A(t,\xi )\sum_{n=1}^{N}\frac{\varphi
(x,\lambda _{n})\cos \lambda _{n}\xi }{\alpha _{n}}d\xi .
\end{equation*}%
It is easily found by using (\ref{9}) and (\ref{10})%
\begin{equation*}
F(x,t)=\sum_{n=1}^{\infty }\left( \frac{\varphi _{0}(x,\lambda _{n})\varphi
_{0}(t,\lambda _{n})}{\alpha _{n}}-\frac{\varphi _{0}(x,\lambda
_{n}^{0})\varphi _{0}(t,\lambda _{n}^{0})}{\alpha _{n}^{0}}\right) .
\end{equation*}%
Let $f(x)$ be an absolutely continuous function, $f^{\prime }(0)=f(\pi )=0.$
Then using expansion formula (see \cite{Kar-Mam}),%
\begin{equation*}
\sum_{n=1}^{\infty }\int_{0}^{\pi }f(t)\rho (t)\frac{\varphi (x,\lambda
_{n})\varphi (t,\lambda _{n})}{\alpha _{n}}dt=f(x),
\end{equation*}%
\begin{equation}
\sum_{n=1}^{\infty }\int_{0}^{\pi }f(t)\rho (t)\frac{\varphi _{0}(x,\lambda
_{n}^{0})\varphi _{0}(t,\lambda _{n}^{0})}{\alpha _{n}^{0}}dt=f(x).
\label{13}
\end{equation}%
Using (\ref{13}) we have:%
\begin{equation*}
\lim_{N\rightarrow \infty }\max_{0\leq x\leq \pi }\left\vert \int_{0}^{\pi
}f(t)\rho (t)\Phi _{N}(x,t)dt\right\vert =
\end{equation*}%
\begin{equation*}
=\lim_{N\rightarrow \infty }\max_{0\leq x\leq \pi }\left\vert \int_{0}^{\pi
}f(t)\rho (t)\sum_{n=1}^{N}\left( \frac{\varphi (x,\lambda _{n})\varphi
(t,\lambda _{n})}{\alpha _{n}}-\frac{\varphi _{0}(x,\lambda _{n}^{0})\varphi
_{0}(t,\lambda _{n}^{0})}{\alpha _{n}^{0}}\right) dt\right\vert \leq
\end{equation*}%
\begin{equation*}
\leq \lim_{N\rightarrow \infty }\left\{ \max_{0\leq x\leq \pi }\left\vert
\int_{0}^{\pi }f(t)\rho (t)\sum_{n=1}^{N}\frac{\varphi (x,\lambda
_{n})\varphi (t,\lambda _{n})}{\alpha _{n}}dt-f(x)\right\vert +\right.
\end{equation*}%
\begin{equation}
+\left. \max_{0\leq x\leq \pi }\left\vert \int_{0}^{\pi }f(t)\rho
(t)\sum_{n=1}^{N}\frac{\varphi _{0}(x,\lambda _{n}^{0})\varphi
_{0}(t,\lambda _{n}^{0})}{\alpha _{n}^{0}}dt-f(x)\right\vert \right\} =0.
\label{14}
\end{equation}%
We obtain uniformly on $x\in \lbrack 0,\pi ]$%
\begin{equation*}
\lim_{N\rightarrow \infty }\int_{0}^{\pi }f(t)\rho (t)I_{N1}(x,t)dt=
\end{equation*}%
\begin{equation*}
=\lim_{N\rightarrow \infty }\int_{0}^{\pi }f(t)\rho (t)\sum_{n=1}^{N}\left( 
\frac{\varphi _{0}(x,\lambda _{n})\varphi _{0}(t,\lambda _{n})}{\alpha _{n}}-%
\frac{\varphi _{0}(x,\lambda _{n}^{0})\varphi _{0}(t,\lambda _{n}^{0})}{%
\alpha _{n}^{0}}\right) dt=
\end{equation*}%
\begin{equation}
=\int_{0}^{\pi }f(t)\rho (t)F(x,t)dt.  \label{15}
\end{equation}%
It follows from (\ref{6}) that 
\begin{equation}
\cos \lambda \xi =\left\{ 
\begin{array}{c}
\varphi _{0}(\xi ,\lambda )\quad,\quad \xi <a, \\ 
\frac{2\alpha }{1+\alpha }\varphi _{0}(\frac{\xi }{\alpha }+a-\frac{a}{
\alpha },\lambda )+\frac{1-\alpha }{1+\alpha }\varphi _{0}(2a-\xi ,\lambda )\quad,\quad \xi >a.%
\end{array}
\right.  \label{16}
\end{equation}
Taking into account (\ref{16}) and (\ref{13}), we get 
\begin{equation*}
\lim_{N\rightarrow \infty }\int_{0}^{\pi }f(t)\rho (t)I_{N2}(x,t)dt=
\end{equation*}
\begin{equation*}
=\lim_{N\rightarrow \infty }\int_{0}^{\pi }f(t)\rho (t)\int_{0}^{\mu
^{+}(x)}A(x,\xi )\sum_{n=1}^{N}\frac{\varphi _{0}(t,\lambda _{n}^{0})\cos
\lambda _{n}^{0}\xi }{\alpha _{n}^{0}}d\xi dt=
\end{equation*}%
\begin{equation*}
=\lim_{N\rightarrow \infty }\int_{0}^{\pi }f(t)\rho (t)\int_{0}^{a}A(x,\xi
)\sum_{n=1}^{N}\frac{\varphi _{0}(t,\lambda _{n}^{0})\varphi _{0}(\xi
,\lambda )}{\alpha _{n}^{0}}d\xi dt+
\end{equation*}%
\begin{equation*}
+\frac{2\alpha }{1+\alpha }\lim_{N\rightarrow \infty }\int_{0}^{\pi
}f(t)\rho (t)\int_{a}^{\alpha x-\alpha a+a}A(x,\xi )\times
\end{equation*}%
\begin{equation*}
\times \sum_{n=1}^{N}\frac{\varphi _{0}(t,\lambda _{n}^{0})\varphi _{0}(%
\frac{\xi }{\alpha }+a-\frac{a}{\alpha },\lambda _{n}^{0})}{\alpha _{n}^{0}}%
d\xi dt+
\end{equation*}%
\begin{equation*}
+\frac{1-\alpha }{1+\alpha }\lim_{N\rightarrow \infty }\int_{0}^{\pi
}f(t)\rho (t)\int_{a}^{\alpha x-\alpha a+a}A(x,\xi )\times
\end{equation*}%
\begin{equation*}
\times \sum_{n=1}^{N}\frac{\varphi _{0}(t,\lambda _{n}^{0})\varphi
_{0}(2a-\xi ,\lambda _{n}^{0})}{\alpha _{n}^{0}}d\xi dt=
\end{equation*}%
\begin{equation*}
=\int_{0}^{a}A(x,\xi )\int_{0}^{\pi }f(t)\rho (t)\sum_{n=1}^{\infty }\frac{%
\varphi _{0}(t,\lambda _{n}^{0})\varphi _{0}(\xi ,\lambda _{n}^{0})}{\alpha
_{n}^{0}}dtd\xi +
\end{equation*}%
\begin{equation*}
+\frac{2\alpha }{1+\alpha }\int_{a}^{\alpha x-\alpha a+a}A(x,\xi
)\int_{0}^{\pi }f(t)\rho (t)\times
\end{equation*}%
\begin{equation*}
\times \sum_{n=1}^{\infty }\frac{\varphi _{0}(t,\lambda _{n}^{0})\varphi
_{0}(\frac{\xi }{\alpha }+a-\frac{a}{\alpha },\lambda _{n}^{0})}{\alpha
_{n}^{0}}dtd\xi +
\end{equation*}%
\begin{equation*}
+\frac{1-\alpha }{1+\alpha }\int_{a}^{\alpha x-\alpha a+a}A(x,\xi
)\int_{0}^{\pi }f(t)\rho (t)\times
\end{equation*}%
\begin{equation*}
\times \sum_{n=1}^{\infty }\frac{\varphi _{0}(t,\lambda _{n}^{0})\varphi
_{0}(2a-\xi ,\lambda _{n}^{0})}{\alpha _{n}^{0}}dtd\xi =
\end{equation*}%
\begin{equation*}
=\int_{0}^{a}A(x,\xi )f(\xi )d\xi +\frac{2\alpha }{1+\alpha }%
\int_{a}^{\alpha x-\alpha a+a}A(x,\xi )f\left( \frac{\xi }{\alpha }+a-\frac{a%
}{\alpha }\right) d\xi +
\end{equation*}%
\begin{equation*}
+\frac{1-\alpha }{1+\alpha }\int_{a}^{\alpha x-\alpha a+a}A(x,\xi )f\left(
2a-\xi \right) d\xi .
\end{equation*}%
Substituting $\frac{\xi }{\alpha }+a-\frac{a}{\alpha }\rightarrow \xi
^{\prime }$ and $2a-\xi \rightarrow \xi ^{\prime \prime }$ we obtain 
\begin{equation*}
\lim_{N\rightarrow \infty }\int_{0}^{\pi }f(t)\rho
(t)I_{N2}(x,t)dt=\int_{0}^{a}A(x,\xi )f\left( \xi \right) d\xi +
\end{equation*}%
\begin{equation*}
+\frac{2\alpha ^{2}}{1+\alpha }\int_{a}^{x}A\left( x,\alpha \xi ^{\prime
}-\alpha a+a\right) f\left( \xi ^{\prime }\right) d\xi ^{\prime }+
\end{equation*}%
\begin{equation*}
+\frac{1-\alpha }{1+\alpha }\int_{-\alpha x+\alpha a+a}^{a}A(x,2a-\xi
^{\prime \prime })f\left( \xi ^{\prime \prime }\right) d\xi ^{\prime \prime
}.
\end{equation*}%
Since $A(x,2a-\xi ^{\prime \prime })\equiv 0$ when $2a-\xi >\alpha x-\alpha
a+a$, we have 
\begin{equation*}
\lim_{N\rightarrow \infty }\int_{0}^{\pi }f(t)\rho
(t)I_{N2}(x,t)dt=\int_{0}^{a}A(x,t)f\left( t\right) dt+
\end{equation*}%
\begin{equation*}
+\frac{2\alpha ^{2}}{1+\alpha }\int_{a}^{x}A\left( x,\alpha t-\alpha
a+a\right) f\left( t\right) dt+
\end{equation*}%
\begin{equation*}
+\frac{1-\alpha }{1+\alpha }\int_{0}^{a}A(x,2a-t)f\left( t\right) dt.
\end{equation*}%
Thus, uniformly on $x\in \lbrack 0,\pi ]:$%
\begin{equation*}
\lim_{N\rightarrow \infty }\int_{0}^{\pi }f(t)\rho
(t)I_{N2}(x,t)dt=\int_{0}^{x}\frac{2\rho \left( t\right) }{1+\sqrt{\rho
\left( t\right) }}A(x,\mu ^{+}(t))f\left( t\right) dt+
\end{equation*}%
\begin{equation}
+\int_{0}^{x}\frac{1-\sqrt{\rho \left( 2a-t\right) }}{1+\sqrt{\rho \left(
2a-t\right) }}A(x,2a-t)f\left( t\right) dt.  \label{17}
\end{equation}%
Using (\ref{9}), uniformly on $x\in \lbrack 0,\pi ]$%
\begin{equation*}
\lim_{N\rightarrow \infty }\int_{0}^{\pi }f(t)\rho (t)I_{N3}(x,t)dt=
\end{equation*}%
\begin{equation*}
=\lim_{N\rightarrow \infty }\int_{0}^{\pi }f(t)\rho (t)\int_{0}^{\mu
^{+}(x)}A(x,\xi )\sum_{n=1}^{N}\left( \frac{\varphi _{0}(t,\lambda _{n})\cos
\lambda _{n}\xi }{\alpha _{n}}-\right.
\end{equation*}%
\begin{equation*}
\left. -\frac{\varphi _{0}(t,\lambda _{n}^{0})\cos \lambda _{n}^{0}\xi }{%
\alpha _{n}^{0}}\right) d\xi dt=
\end{equation*}%
\begin{equation*}
=\int_{0}^{\pi }f(t)\rho (t)\int_{0}^{\mu ^{+}(x)}A(x,\xi
)\sum_{n=1}^{\infty }\left( \frac{\varphi _{0}(t,\lambda _{n})\cos \lambda
_{n}\xi }{\alpha _{n}}-\right.
\end{equation*}%
\begin{equation*}
\left. -\frac{\varphi _{0}(t,\lambda _{n}^{0})\cos \lambda _{n}^{0}\xi }{%
\alpha _{n}^{0}}\right) d\xi dt=
\end{equation*}%
\begin{equation}
=\int_{0}^{\pi }f(t)\rho (t)\int_{0}^{\mu ^{+}(x)}A(x,\xi )F_{0}(\xi ,t)d\xi
dt.  \label{18}
\end{equation}%
Using the residue theorem and the formula $\frac{\varphi \left( x,\lambda
_{n}\right) }{2\lambda _{n}\alpha _{n}}=\frac{\psi \left( x,\lambda
_{n}\right) }{\overset{.}{\Delta }(\lambda _{n})}$ (see \cite{Kar-Mam}),
where $\psi \left( x,\lambda \right) $ is the solution of (\ref{1}) with
initial condition $\psi \left( \pi ,\lambda \right) =0,$ $\psi ^{\prime
}\left( \pi ,\lambda \right) =1$ and $\Delta \left( \lambda \right) =\varphi
\left( \pi ,\lambda \right) $ is the characteristic function of (\ref{1})-(%
\ref{3}), $\overset{.}{\Delta }(\lambda )=\frac{d}{d\lambda }\Delta \left(
\lambda \right) ,$ we calculate%
\begin{equation*}
\lim_{N\rightarrow \infty }\int_{0}^{\pi }f(t)\rho (t)I_{N4}(x,t)dt=
\end{equation*}%
\begin{equation*}
=\lim_{N\rightarrow \infty }\int_{0}^{\pi }f(t)\rho (t)\int_{0}^{\mu
^{+}(t)}A(t,\xi )\sum_{n=1}^{N}\frac{\varphi (x,\lambda _{n})\cos \lambda
_{n}\xi }{\alpha _{n}}d\xi dt=
\end{equation*}%
\begin{equation*}
=2\lim_{N\rightarrow \infty }\int_{0}^{\pi }f(t)\rho (t)\sum_{\left\vert
\lambda _{n}\right\vert \leq N}\lambda _{n}\frac{\psi \left( x,\lambda
_{n}\right) }{\overset{.}{\Delta }(\lambda _{n})}\int_{0}^{\mu
^{+}(t)}A(t,\xi )\cos \lambda _{n}\xi d\xi dt=
\end{equation*}%
\begin{equation*}
=2\lim_{N\rightarrow \infty }\int_{0}^{\pi }f(t)\rho (t)\sum_{\left\vert
\lambda _{n}\right\vert \leq N}\underset{\lambda = \lambda _{n}}{Res}
\left[ \lambda \frac{\psi \left( x,\lambda \right) }{\Delta (\lambda )}
\int_{0}^{\mu ^{+}(t)}A(t,\xi )\cos \lambda \xi d\xi \right] dt=
\end{equation*}%
\begin{equation*}
=2\lim_{N\rightarrow \infty }\int_{0}^{\pi }f(t)\rho (t)\frac{1}{2\pi i}%
\oint_{\Gamma _{N}}\lambda \frac{\psi \left( x,\lambda \right) }{\Delta
(\lambda )}\int_{0}^{\mu ^{+}(t)}A(t,\xi )\cos \lambda \xi d\xi d\lambda dt=
\end{equation*}%
\begin{equation*}
=\lim_{N\rightarrow \infty }\int_{0}^{\pi }f(t)\rho (t)\frac{1}{\pi i}%
\oint_{\Gamma _{N}}\lambda \frac{\psi \left( x,\lambda \right) }{\Delta
(\lambda )}e^{\left\vert {Im}\lambda \right\vert \mu
^{+}(t)}e^{-\left\vert {Im}\lambda \right\vert \mu ^{+}(t)}\times
\end{equation*}%
\begin{equation*}
\times \int_{0}^{\mu ^{+}(t)}A(t,\xi )\cos \lambda \xi d\xi d\lambda dt=
\end{equation*}%
\begin{equation*}
=\int_{0}^{\pi }f(t)\rho (t)\lim_{N\rightarrow \infty }\left( \frac{1}{\pi i}%
\oint_{\Gamma _{N}}\lambda \frac{\psi \left( x,\lambda \right) }{\Delta
(\lambda )}e^{\left\vert {Im}\lambda \right\vert \mu
^{+}(t)}e^{-\left\vert {Im}\lambda \right\vert \mu ^{+}(t)}\times
\right.
\end{equation*}%
\begin{equation}
\left. \times \int_{0}^{\mu ^{+}(t)}A(t,\xi )\cos \lambda \xi d\xi d\lambda
\right) dt  \label{19}
\end{equation}%
where $\Gamma _{N}=\left\{ \lambda :\left\vert \lambda \right\vert
=N\right\} .$ Since (see \cite{Kar-Mam})%
\begin{eqnarray*}
\psi (x,\lambda ) &=&O\left( \frac{e^{\left\vert Im\lambda \right\vert (\mu
^{+}(\pi )-\mu ^{+}(x))}}{\left\vert \lambda \right\vert }\right) ,\quad
\left\vert \lambda \right\vert \rightarrow \infty , \\
\left\vert \Delta (\lambda )\right\vert &\geq &C_{\delta }e^{\left\vert
Im\lambda \right\vert \mu ^{+}(\pi )},\quad \lambda \in G_{\delta },
\end{eqnarray*}%
($G_{\delta }=\left\{ \lambda :\left\vert \lambda -\lambda _{n}\right\vert
\geq \delta \right\} ,$ $\delta $ is a sufficiently small positive
number) and according to Lemma 1.3.1 from \cite{Mar1}%
\begin{equation*}
\lim_{\left\vert \lambda \right\vert \rightarrow \infty }\max_{0\leq t\leq
\pi }e^{-\left\vert {Im}\lambda \right\vert \mu ^{+}(t)}\left\vert
\int_{0}^{\mu ^{+}(t)}A(t,\xi )\cos \lambda \xi d\xi d\lambda \right\vert =0
\end{equation*}%
from the equality (\ref{19}) we get 
\begin{equation}
\lim_{N\rightarrow \infty }\int_{0}^{\pi }f(t)\rho (t)I_{N4}(x,t)dt=0.
\label{20}
\end{equation}%
Multiplying both sides of (\ref{12}) by $\rho (x)f(x),$ integrating from $0$
to $\pi $, tending to limit when $N\rightarrow \infty $ and using (\ref{14}%
), (\ref{15}), (\ref{17}), (\ref{18}) and (\ref{20}) we have 
\begin{equation*}
\int_{0}^{x}\frac{2\rho \left( t\right) }{1+\sqrt{\rho \left( t\right) }}%
A(x,\mu ^{+}(t))f\left( t\right) dt+\int_{0}^{x}\frac{1-\sqrt{\rho \left(
2a-t\right) }}{1+\sqrt{\rho \left( 2a-t\right) }}A(x,2a-t)f\left( t\right)
dt+
\end{equation*}%
\begin{equation*}
+\int_{0}^{\pi }f(t)\rho (t)F(x,t)dt+\int_{0}^{\pi }f(t)\rho
(t)\int_{0}^{\mu ^{+}(x)}A(x,\xi )F_{0}(\xi ,t)d\xi dt=0.
\end{equation*}%
Since $f(x)$ can be chosen arbitrarily, we obtain 
\begin{equation*}
\frac{2\rho \left( t\right) }{1+\sqrt{\rho \left( t\right) }}A(x,\mu
^{+}(t))+\frac{1-\sqrt{\rho \left( 2a-t\right) }}{1+\sqrt{\rho \left(
2a-t\right) }}A(x,2a-t)+F(x,t)+
\end{equation*}%
\begin{equation*}
+\int_{0}^{\mu ^{+}(x)}A(x,\xi )F_{0}(\xi ,t)d\xi =0.
\end{equation*}
\end{proof}

\section{Theorem for the Solution of the Inverse Problem}

\begin{theorem}
For each fixed $x\in \lbrack 0,\pi ]$ main equation (\ref{8}) has a unique
solution $A(x,.)\in L_{2,\rho }\left( 0,\mu ^{+}(x)\right) $.
\end{theorem}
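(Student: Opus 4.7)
The plan is to recast the main equation (\ref{8}) as a Fredholm equation of the second kind in $L_{2,\rho}(0, \mu^+(x))$ and apply the Fredholm alternative. Setting $g(\xi) := A(x, \xi)$ and using the changes of variables $\xi = \mu^+(t)$ and $\xi = 2a - t$ in the first two terms of (\ref{8}), I would rewrite the equation in the abstract form
$$(I + \mathcal{K}_x)\, g = -F(x, \cdot),$$
where $\mathcal{K}_x$ consists of the integral operator with kernel $F_0(\xi, t)$ together with a bounded reflection contribution coming from the $A(x, 2a-t)$ term (which is understood to vanish when $2a-t > \mu^+(x)$). Compactness of $\mathcal{K}_x$ on $L_{2,\rho}(0, \mu^+(x))$ is obtained from the Hilbert--Schmidt property of $F_0$, which is itself a consequence of the asymptotics $\lambda_n - \lambda_n^0 = O(1/n)$ and $\alpha_n - \alpha_n^0 = O(1/n)$ recorded at the end of Section 1.

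By the Fredholm alternative, existence and uniqueness of $g$ reduce to showing that the homogeneous equation admits only the trivial solution. To prove this, I would take any $h \in L_{2,\rho}(0, \mu^+(x))$ satisfying the homogeneous version of (\ref{8}), multiply by $\rho(t) f(t)$ for arbitrary absolutely continuous $f$ with $f'(0) = f(\pi) = 0$, and integrate over $(0, x)$. Tracing the identities (\ref{14}), (\ref{15}), (\ref{17}), (\ref{18}) and (\ref{20}) from the proof of Theorem 2.1 in the reverse direction, combined with the completeness formula (\ref{13}), one is led to a Parseval-type identity of the form
$$\sum_{n=1}^{\infty} \frac{1}{\alpha_n} \Bigl( \int_0^{\mu^+(x)} h(\xi) \cos \lambda_n \xi \, d\xi \Bigr) \Bigl( \int_0^{\pi} \rho(t) f(t) \varphi(t, \lambda_n) \, dt \Bigr) = 0.$$
Since the spectral coefficients $\int_0^{\pi} \rho(t) f(t) \varphi(t, \lambda_n)\, dt$ can be prescribed essentially freely by the density of such $f$ and the positivity of $\alpha_n$, this forces $\int_0^{\mu^+(x)} h(\xi) \cos \lambda_n \xi \, d\xi = 0$ for every $n$, and completeness of the system $\{\cos \lambda_n \xi\}_{n \ge 1}$ in $L_2(0, \mu^+(x))$ then yields $h \equiv 0$.

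The main obstacle is the reversal step just sketched. The derivation of (\ref{8}) in Theorem 2.1 relied crucially on the representation (\ref{4}) for $\varphi(x, \lambda)$, which is unavailable for a hypothetical solution $h$ of the homogeneous equation. The technical crux is therefore to organize the reverse bookkeeping so that the two reflection terms at $\mu^+(t)$ and $2a - t$ (arising from the discontinuity of $\rho$) reassemble cleanly into the Parseval sum, and to control the residue-type contribution analogous to $I_{N4}$ without such an explicit representation. Once this is handled, the condition $a(1+\alpha) > \pi\alpha$ ensures that $\mu^+(x)$ remains within the range where the cosine system $\{\cos \lambda_n \xi\}$ is complete, and the uniqueness claim --- and hence the conclusion of the theorem --- follows.
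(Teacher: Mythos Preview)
Your proposal has two genuine gaps, both of which the paper resolves differently.

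First, the operator $\mathcal{K}_x$ you describe is \emph{not} compact: you bundle the reflection $A(x,\cdot)\mapsto \frac{1-\alpha}{1+\alpha}A(x,2a-\cdot)$ into $\mathcal{K}_x$, but this is a bounded composition-type operator, not a compact one, so the Hilbert--Schmidt property of $F_0$ does not give compactness of $\mathcal{K}_x$ and the Fredholm alternative is not directly available. The paper handles this by separating the equation as $L_xA+K_xA=-F$, where $L_x$ contains the identity \emph{together with} the reflection (see (\ref{21})), and $K_x$ is the $F_0$-integral alone. It then explicitly computes $L_x^{-1}$ and proves it is bounded on $L_{2,\rho}(0,\pi)$, so the equation becomes $(I+L_x^{-1}K_x)A=-L_x^{-1}F$ with $L_x^{-1}K_x$ compact; only then does the Fredholm alternative apply.

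Second, the obstacle you flag in your last paragraph is fatal to your homogeneous argument as written: reversing the identities of Theorem~2.1 genuinely requires the representation (\ref{4}) for $\varphi$, which a hypothetical $h$ does not have, and the $I_{N4}$-type residue term cannot be controlled. The paper avoids this entirely. Instead of testing against arbitrary $f$, it multiplies the homogeneous equation by $\rho(t)\,(L_xA)(t)$ itself and integrates over $(0,x)$. Expanding $F_0$ via (\ref{9}) and converting $\cos\lambda\xi$ to $\varphi_0$ via (\ref{16}), one obtains after substitutions
\[
\int_0^x\rho(t)|(L_xA)(t)|^2\,dt+\sum_{n\ge1}\frac{1}{\alpha_n}\Bigl(\int_0^x\rho(t)(L_xA)(t)\varphi_0(t,\lambda_n)\,dt\Bigr)^2-\sum_{n\ge1}\frac{1}{\alpha_n^0}\Bigl(\int_0^x\rho(t)(L_xA)(t)\varphi_0(t,\lambda_n^0)\,dt\Bigr)^2=0.
\]
The first and third terms cancel by Parseval for the \emph{unperturbed} system $\{\varphi_0(\cdot,\lambda_n^0)\}$, leaving a sum of nonnegative terms equal to zero. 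Hence $\int_0^x\rho(t)(L_xA)(t)\varphi_0(t,\lambda_n)\,dt=0$ for all $n$, and completeness of $\{\varphi_0(\cdot,\lambda_n)\}$ in $L_{2,\rho}(0,\pi)$ gives $L_xA=0$; the already-established invertibility of $L_x$ then yields $A=0$. Note that the relevant complete system is $\{\varphi_0(\cdot,\lambda_n)\}$, not $\{\cos\lambda_n\xi\}$ on $(0,\mu^+(x))$ as you propose.
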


\begin{proof}
We show that for each fixed $x>a$ the equation (\ref{8}) is equivalent to
the equation of the form $\left( I+B\right) f=g$ where $B$ is a completely
continuous operator, $I$ is an identity operator in the space $L_{2,\rho
}(0,\pi )$. (When $x\leq a$ this fact is obvious.)

When $x>a$ rewrite (\ref{8}) as 
\begin{equation*}
L_{x}A(x,.)+K_{x}A(x,.)=-F(x,.),
\end{equation*}%
where%
\begin{equation}
\left( L_{x}f\right) (t)=\left\{ 
\begin{array}{c}
f(t)+\frac{1-\alpha }{1+\alpha }f(2a-t)\quad,\quad t\leq a<x, \\ 
\frac{2}{1+\alpha }f(\alpha t-\alpha a+a)\quad,\quad  a<t<x.
\end{array}
\right.  \label{21}
\end{equation}%
\begin{equation*}
\left( K_{x}f\right) (t)=\int_{0}^{\alpha x-\alpha a+a}f(\xi )F_{0}\left(
\xi ,t\right) d\xi ,\quad 0<t<x.
\end{equation*}%
\ It is sufficient to prove that $L_{x}$ is invertible, i.e. has a bounded
inverse in $L_{2,\rho }\left( 0,\pi \right) $.

Consider the equation $\left( L_{x}f\right) (t)=\phi (t),$ $\phi (t)\in
L_{2,\rho }\left( 0,\pi \right) ,$ i.e. 
\begin{equation*}
\left\{ 
\begin{array}{c}
f(t)+\frac{1-\alpha }{1+\alpha }f(2a-t)=\phi (t) \quad,\quad t\leq a<x, \\ 
\frac{2}{1+\alpha }f(\alpha t-\alpha a+a)=\phi (t)\quad,\quad a<t<x.
\end{array}%
\right.
\end{equation*}%
From here it is easily to obtain 
\begin{equation*}
f(t)=\left( L_{x}^{-1}\phi \right) (t)=\left\{ 
\begin{array}{c}
\phi (t)-\frac{1-\alpha }{2}\phi \left( \frac{-t+\alpha a+a}{\alpha }\right) \quad,\quad t<a \\ 
\frac{1+\alpha }{2}\phi \left( \frac{t+\alpha a-a}{\alpha }\right) \quad,\quad t>a.%
\end{array}%
\right.
\end{equation*}%
We show that 
\begin{equation*}
\left\Vert f\right\Vert _{L_{2}}=\left\Vert L_{x}^{-1}\phi \right\Vert \leq
C\left\Vert \phi \right\Vert _{L_{2}}.
\end{equation*}%
In fact, 
\begin{equation*}
\int_{0}^{\pi }\left\vert f(t)\right\vert ^{2}dt=\int_{0}^{a}\left\vert \phi
(t)-\frac{1-\alpha }{2}\phi \left( \frac{-t+\alpha a+a}{\alpha }\right)
\right\vert ^{2}dt+
\end{equation*}%
\begin{equation*}
+\int_{a}^{\pi }\left\vert \frac{1+\alpha }{2}\phi \left( \frac{t+\alpha a-a%
}{\alpha }\right) \right\vert ^{2}dt\leq 2\int_{0}^{a}\left\vert \phi
(t)\right\vert ^{2}dt+
\end{equation*}%
\begin{equation*}
+2\left( \frac{1-\alpha }{2}\right) ^{2}\int_{0}^{a}\left\vert \phi \left( 
\frac{-t+\alpha a+a}{\alpha }\right) \right\vert ^{2}dt+
\end{equation*}%
\begin{equation*}
+\left( \frac{1+\alpha }{2}\right) ^{2}\int_{a}^{\pi }\left\vert \phi \left( 
\frac{t+\alpha a-a}{\alpha }\right) \right\vert ^{2}dt\leq
\end{equation*}%
\begin{equation*}
\leq 2\int_{0}^{\pi }\left\vert \phi (t)\right\vert ^{2}dt+\frac{\alpha
\left( 1-\alpha \right) ^{2}}{2}\int_{a}^{\frac{\alpha a+a}{\alpha }%
}\left\vert \phi (t)\right\vert ^{2}dt+
\end{equation*}%
\begin{equation*}
+\alpha \left( \frac{1+\alpha }{2}\right) ^{2}\int_{a}^{\frac{\pi +\alpha a-a%
}{\alpha }}\left\vert \phi (t)\right\vert ^{2}dt.
\end{equation*}%
We put $\phi (t)=0$, when $t>\pi $. Then 
\begin{equation*}
\int_{0}^{\pi }\left\vert f(t)\right\vert ^{2}dt\leq C\int_{0}^{\pi
}\left\vert \phi (t)\right\vert ^{2}dt=C\left\Vert \phi (t)\right\Vert
_{L_{2,\rho }\left( 0,\pi \right) }.
\end{equation*}

So the operator $L_{x}$ is invertible in $L_{2,\rho }\left( 0,\pi \right) $.
Then according to Theorem 3 from \cite{Lus} (see p. 275) it is sufficient to
prove that the equation 
\begin{equation*}
\frac{2}{1+\sqrt{\rho \left( t\right) }}A(\mu ^{+}(t))+\frac{1-\sqrt{\rho
\left( 2a-t\right) }}{1+\sqrt{\rho \left( 2a-t\right) }}A(2a-t)+
\end{equation*}%
\begin{equation}
+\int_{0}^{\mu ^{+}(x)}A(\xi )F_{0}(\xi ,t)d\xi =0  \label{22}
\end{equation}%
has only trivial solution $A(t)=0.$

Let $A(t)$ be a non-trivial solution of (\ref{22}). Then

\begin{equation*}
\int_{0}^{x}\rho (t)\left( \frac{2}{1+\sqrt{\rho \left( t\right) }}A(\mu
^{+}(t))+\frac{1-\sqrt{\rho \left( 2a-t\right) }}{1+\sqrt{\rho \left(
2a-t\right) }}A(2a-t)\right) ^{2}dt+
\end{equation*}%
\begin{equation*}
+\int_{0}^{x}\rho (t)\left( \frac{2}{1+\sqrt{\rho \left( t\right) }}A(\mu
^{+}(t))+\frac{1-\sqrt{\rho \left( 2a-t\right) }}{1+\sqrt{\rho \left(
2a-t\right) }}A(2a-t)\right) \times
\end{equation*}%
\begin{equation*}
\times \int_{0}^{\mu ^{+}(x)}A(\xi )F_{0}(\xi ,t)d\xi dt=0.
\end{equation*}%
From (\ref{9}) we have

\begin{equation*}
\int_{0}^{x}\rho (t)\left( \frac{2}{1+\sqrt{\rho \left( t\right) }}A(\mu
^{+}(t))+\frac{1-\sqrt{\rho \left( 2a-t\right) }}{1+\sqrt{\rho \left(
2a-t\right) }}A(2a-t)\right) ^{2}dt+
\end{equation*}%
\begin{equation*}
+\int_{0}^{x}\frac{2\rho (t)}{1+\sqrt{\rho \left( t\right) }}A(\mu
^{+}(t))\int_{0}^{\mu ^{+}(x)}A(\xi )\times
\end{equation*}%
\begin{equation*}
\times \sum_{n=1}^{\infty }\left( \frac{\varphi _{0}(t,\lambda _{n})\cos
\lambda _{n}\xi }{\alpha _{n}}-\frac{\varphi _{0}(t,\lambda _{n}^{0})\cos
\lambda _{n}^{0}\xi }{\alpha _{n}^{0}}\right) d\xi dt+
\end{equation*}%
\begin{equation*}
+\int_{0}^{x}\frac{1-\sqrt{\rho \left( 2a-t\right) }}{1+\sqrt{\rho \left(
2a-t\right) }}A(2a-t)\int_{0}^{\mu ^{+}(x)}A(\xi )\times
\end{equation*}%
\begin{equation*}
\times \sum_{n=1}^{\infty }\left( \frac{\varphi _{0}(t,\lambda _{n})\cos
\lambda _{n}\xi }{\alpha _{n}}-\frac{\varphi _{0}(t,\lambda _{n}^{0})\cos
\lambda _{n}^{0}\xi }{\alpha _{n}^{0}}\right) d\xi dt=0.
\end{equation*}%
Using (\ref{7}) and (\ref{16}) we obtain

\begin{equation*}
\int_{0}^{x}\rho (t)\left( \frac{2}{1+\sqrt{\rho \left( t\right) }}A(\mu
^{+}(t))+\frac{1-\sqrt{\rho \left( 2a-t\right) }}{1+\sqrt{\rho \left(
2a-t\right) }}A(2a-t)\right) ^{2}dt+
\end{equation*}%
\begin{equation*}
+\int_{0}^{x}\frac{2\rho (t)}{1+\sqrt{\rho \left( t\right) }}A(\mu
^{+}(t))\int_{0}^{a}A(\xi )\sum_{n=1}^{\infty }\frac{\varphi _{0}(\xi
,\lambda _{n})\varphi _{0}(t,\lambda _{n})}{\alpha _{n}}d\xi dt+
\end{equation*}%
\begin{equation*}
+\int_{0}^{x}\frac{1-\sqrt{\rho \left( 2a-t\right) }}{1+\sqrt{\rho \left(
2a-t\right) }}A(2a-t)\int_{0}^{a}A(\xi )\sum_{n=1}^{\infty }\frac{\varphi
_{0}(\xi ,\lambda _{n})\varphi _{0}(t,\lambda _{n})}{\alpha _{n}}d\xi dt+
\end{equation*}%
\begin{equation*}
+\int_{0}^{x}\frac{2\rho (t)}{1+\sqrt{\rho \left( t\right) }}A(\mu
^{+}(t))\int_{a}^{\alpha x-\alpha a+a}A(\xi )\times
\end{equation*}%
\begin{equation*}
\times \sum_{n=1}^{\infty }\frac{2\alpha }{1+\alpha }\frac{\varphi
_{0}\left( \frac{\xi }{\alpha }+a-\frac{a}{\alpha },\lambda _{n}\right)
\varphi _{0}\left( t,\lambda _{n}\right) }{\alpha _{n}}d\xi dt+
\end{equation*}%
\begin{equation*}
+\int_{0}^{x}\frac{1-\sqrt{\rho \left( 2a-t\right) }}{1+\sqrt{\rho \left(
2a-t\right) }}A(2a-t)\int_{a}^{\alpha x-\alpha a+a}A(\xi )\times
\end{equation*}%
\begin{equation*}
\times \sum_{n=1}^{\infty }\frac{2\alpha }{1+\alpha }\frac{\varphi
_{0}\left( \frac{\xi }{\alpha }+a-\frac{a}{\alpha },\lambda _{n}\right)
\varphi _{0}\left( t,\lambda _{n}\right) }{\alpha _{n}}d\xi dt+
\end{equation*}%
\begin{equation*}
+\int_{0}^{x}\frac{2\rho (t)}{1+\sqrt{\rho \left( t\right) }}A(\mu
^{+}(t))\int_{a}^{\alpha x-\alpha a+a}A(\xi )\times
\end{equation*}%
\begin{equation*}
\times \sum_{n=1}^{\infty }\frac{1-\alpha }{1+\alpha }\frac{\varphi
_{0}\left( 2a-\xi ,\lambda _{n}\right) \varphi _{0}\left( t,\lambda
_{n}\right) }{\alpha _{n}}d\xi dt+
\end{equation*}%
\begin{equation*}
+\int_{0}^{x}\frac{1-\sqrt{\rho \left( 2a-t\right) }}{1+\sqrt{\rho \left(
2a-t\right) }}A(2a-t)\int_{a}^{\alpha x-\alpha a+a}A(\xi )\times
\end{equation*}%
\begin{equation*}
\times \sum_{n=1}^{\infty }\frac{1-\alpha }{1+\alpha }\frac{\varphi
_{0}\left( 2a-\xi ,\lambda _{n}\right) \varphi _{0}\left( t,\lambda
_{n}\right) }{\alpha _{n}}d\xi dt-
\end{equation*}%
\begin{equation*}
-\int_{0}^{x}\frac{2\rho (t)}{1+\sqrt{\rho \left( t\right) }}A(\mu
^{+}(t))\int_{0}^{a}A(\xi )\sum_{n=1}^{\infty }\frac{\varphi _{0}(\xi
,\lambda _{n}^{0})\varphi _{0}(t,\lambda _{n}^{0})}{\alpha _{n}^{0}}d\xi dt-
\end{equation*}%
\begin{equation*}
-\int_{0}^{x}\frac{1-\sqrt{\rho \left( 2a-t\right) }}{1+\sqrt{\rho \left(
2a-t\right) }}A(2a-t)\int_{0}^{a}A(\xi )\sum_{n=1}^{\infty }\frac{\varphi
_{0}(\xi ,\lambda _{n}^{0})\varphi _{0}(t,\lambda _{n}^{0})}{\alpha _{n}^{0}}%
d\xi dt-
\end{equation*}%
\begin{equation*}
-\int_{0}^{x}\frac{2\rho (t)}{1+\sqrt{\rho \left( t\right) }}A(\mu
^{+}(t))\int_{a}^{\alpha x-\alpha a+a}A(\xi )\times
\end{equation*}%
\begin{equation*}
\times \sum_{n=1}^{\infty }\frac{2\alpha }{1+\alpha }\frac{\varphi
_{0}\left( \frac{\xi }{\alpha }+a-\frac{a}{\alpha },\lambda _{n}^{0}\right)
\varphi _{0}\left( t,\lambda _{n}^{0}\right) }{\alpha _{n}^{0}}d\xi dt-
\end{equation*}%
\begin{equation*}
-\int_{0}^{x}\frac{1-\sqrt{\rho \left( 2a-t\right) }}{1+\sqrt{\rho \left(
2a-t\right) }}A(2a-t)\int_{a}^{\alpha x-\alpha a+a}A(\xi )\times
\end{equation*}%
\begin{equation*}
\times \sum_{n=1}^{\infty }\frac{2\alpha }{1+\alpha }\frac{\varphi
_{0}\left( \frac{\xi }{\alpha }+a-\frac{a}{\alpha },\lambda _{n}^{0}\right)
\varphi _{0}\left( t,\lambda _{n}^{0}\right) }{\alpha _{n}^{0}}d\xi dt-
\end{equation*}%
\begin{equation*}
-\int_{0}^{x}\frac{2\rho (t)}{1+\sqrt{\rho \left( t\right) }}A(\mu
^{+}(t))\int_{a}^{\alpha x-\alpha a+a}A(\xi )\times
\end{equation*}%
\begin{equation*}
\times \sum_{n=1}^{\infty }\frac{1-\alpha }{1+\alpha }\frac{\varphi
_{0}\left( 2a-\xi ,\lambda _{n}^{0}\right) \varphi _{0}\left( t,\lambda
_{n}^{0}\right) }{\alpha _{n}^{0}}d\xi dt-
\end{equation*}%
\begin{equation*}
-\int_{0}^{x}\frac{1-\sqrt{\rho \left( 2a-t\right) }}{1+\sqrt{\rho \left(
2a-t\right) }}A(2a-t)\int_{a}^{\alpha x-\alpha a+a}A(\xi )\times
\end{equation*}%
\begin{equation*}
\times \sum_{n=1}^{\infty }\frac{1-\alpha }{1+\alpha }\frac{\varphi
_{0}\left( 2a-\xi ,\lambda _{n}^{0}\right) \varphi _{0}\left( t,\lambda
_{n}^{0}\right) }{\alpha _{n}^{0}}d\xi dt=0.
\end{equation*}%
Substituting $\xi \rightarrow \frac{\xi }{\alpha }+a-\frac{a}{\alpha }$ in
third, fourth, ninth, and tenth double integrals and $\xi \rightarrow
2\alpha -\xi $ in fifth, sixth, eleventh and twelfth double integrals we get 
\begin{equation*}
\int_{0}^{x}\rho (t)\left( \frac{2}{1+\sqrt{\rho \left( t\right) }}A(\mu
^{+}(t))+\frac{1-\sqrt{\rho \left( 2a-t\right) }}{1+\sqrt{\rho \left(
2a-t\right) }}A(2a-t)\right) ^{2}dt+
\end{equation*}%
\begin{equation*}
+\int_{0}^{x}\frac{2\rho (t)}{1+\sqrt{\rho \left( t\right) }}A(\mu
^{+}(t))\int_{0}^{a}A(\xi )\sum_{n=1}^{\infty }\frac{\varphi _{0}(\xi
,\lambda _{n})\varphi _{0}(t,\lambda _{n})}{\alpha _{n}}d\xi dt+
\end{equation*}%
\begin{equation*}
+\int_{0}^{x}\frac{1-\sqrt{\rho \left( 2a-t\right) }}{1+\sqrt{\rho \left(
2a-t\right) }}A(2a-t)\int_{0}^{a}A(\xi )\sum_{n=1}^{\infty }\frac{\varphi
_{0}(\xi ,\lambda _{n})\varphi _{0}(t,\lambda _{n})}{\alpha _{n}}d\xi dt+
\end{equation*}%
\begin{equation*}
+\int_{0}^{x}\frac{2\rho (t)}{1+\sqrt{\rho \left( t\right) }}A(\mu
^{+}(t))\int_{a}^{x}A(\mu ^{+}(\xi ))\times
\end{equation*}%
\begin{equation*}
\times \sum_{n=1}^{\infty }\frac{2\alpha ^{2}}{1+\alpha }\frac{\varphi
_{0}(\xi ,\lambda _{n})\varphi _{0}(t,\lambda _{n})}{\alpha _{n}}d\xi dt+
\end{equation*}%
\begin{equation*}
+\int_{0}^{x}\frac{1-\sqrt{\rho \left( 2a-t\right) }}{1+\sqrt{\rho \left(
2a-t\right) }}A(2a-t)\int_{a}^{x}A(\mu ^{+}(\xi ))\times
\end{equation*}%
\begin{equation*}
\times \sum_{n=1}^{\infty }\frac{2\alpha ^{2}}{1+\alpha }\frac{\varphi
_{0}(\xi ,\lambda _{n})\varphi _{0}(t,\lambda _{n})}{\alpha _{n}}d\xi dt+
\end{equation*}%
\begin{equation*}
+\int_{0}^{x}\frac{2\rho (t)}{1+\sqrt{\rho \left( t\right) }}A(\mu
^{+}(t))\int_{-\alpha x+\alpha a+a}^{a}A(2a-\xi )\times
\end{equation*}%
\begin{equation*}
\times \sum_{n=1}^{\infty }\frac{1-\alpha }{1+\alpha }\frac{\varphi
_{0}\left( \xi ,\lambda _{n}\right) \varphi _{0}\left( t,\lambda _{n}\right) 
}{\alpha _{n}}d\xi dt+
\end{equation*}%
\begin{equation*}
+\int_{0}^{x}\frac{1-\sqrt{\rho \left( 2a-t\right) }}{1+\sqrt{\rho \left(
2a-t\right) }}A(2a-t)\int_{-\alpha x+\alpha a+a}^{a}A(2a-\xi )\times
\end{equation*}%
\begin{equation*}
\times \sum_{n=1}^{\infty }\frac{1-\alpha }{1+\alpha }\frac{\varphi
_{0}\left( \xi ,\lambda _{n}\right) \varphi _{0}\left( t,\lambda _{n}\right) 
}{\alpha _{n}}d\xi dt-
\end{equation*}%
\begin{equation*}
-\int_{0}^{x}\frac{2\rho (t)}{1+\sqrt{\rho \left( t\right) }}A(\mu
^{+}(t))\int_{0}^{a}A(\xi )\sum_{n=1}^{\infty }\frac{\varphi _{0}(\xi
,\lambda _{n}^{0})\varphi _{0}(t,\lambda _{n}^{0})}{\alpha _{n}^{0}}d\xi dt-
\end{equation*}%
\begin{equation*}
-\int_{0}^{x}\frac{1-\sqrt{\rho \left( 2a-t\right) }}{1+\sqrt{\rho \left(
2a-t\right) }}A(2a-t)\int_{0}^{a}A(\xi )\times
\end{equation*}%
\begin{equation*}
\times \sum_{n=1}^{\infty }\frac{\varphi _{0}(\xi ,\lambda _{n}^{0})\varphi
_{0}(t,\lambda _{n}^{0})}{\alpha _{n}^{0}}d\xi dt-
\end{equation*}%
\begin{equation*}
-\int_{0}^{x}\frac{2\rho (t)}{1+\sqrt{\rho \left( t\right) }}A(\mu
^{+}(t))\int_{a}^{x}A(\mu ^{+}(\xi ))\times
\end{equation*}%
\begin{equation*}
\times \sum_{n=1}^{\infty }\frac{2\alpha ^{2}}{1+\alpha }\frac{\varphi
_{0}(\xi ,\lambda _{n}^{0})\varphi _{0}(t,\lambda _{n}^{0})}{\alpha _{n}^{0}}%
d\xi dt-
\end{equation*}%
\begin{equation*}
-\int_{0}^{x}\frac{1-\sqrt{\rho \left( 2a-t\right) }}{1+\sqrt{\rho \left(
2a-t\right) }}A(2a-t)\int_{a}^{x}A(\mu ^{+}(\xi ))\times
\end{equation*}%
\begin{equation*}
\times \sum_{n=1}^{\infty }\frac{2\alpha ^{2}}{1+\alpha }\frac{\varphi
_{0}(\xi ,\lambda _{n}^{0})\varphi _{0}(t,\lambda _{n}^{0})}{\alpha _{n}^{0}}%
d\xi dt-
\end{equation*}%
\begin{equation*}
-\int_{0}^{x}\frac{2\rho (t)}{1+\sqrt{\rho \left( t\right) }}A(\mu
^{+}(t))\int_{-\alpha x+\alpha a+a}^{a}A(2a-\xi )\times
\end{equation*}%
\begin{equation*}
\times \sum_{n=1}^{\infty }\frac{1-\alpha }{1+\alpha }\frac{\varphi
_{0}\left( \xi ,\lambda _{n}^{0}\right) \varphi _{0}\left( t,\lambda
_{n}^{0}\right) }{\alpha _{n}^{0}}d\xi dt-
\end{equation*}%
\begin{equation*}
-\int_{0}^{x}\frac{1-\sqrt{\rho \left( 2a-t\right) }}{1+\sqrt{\rho \left(
2a-t\right) }}A(2a-t)\int_{-\alpha x+\alpha a+a}^{a}A(2a-\xi )\times
\end{equation*}%
\begin{equation*}
\times \sum_{n=1}^{\infty }\frac{1-\alpha }{1+\alpha }\frac{\varphi
_{0}\left( \xi ,\lambda _{n}^{0}\right) \varphi _{0}\left( t,\lambda
_{n}^{0}\right) }{\alpha _{n}^{0}}d\xi dt=0,
\end{equation*}%
from which we have 
\begin{equation*}
\int_{0}^{x}\rho (t)\left( \frac{2}{1+\sqrt{\rho \left( t\right) }}A(\mu
^{+}(t))+\frac{1-\sqrt{\rho \left( 2a-t\right) }}{1+\sqrt{\rho \left(
2a-t\right) }}A(2a-t)\right) ^{2}dt+
\end{equation*}%
\begin{equation*}
+\int_{0}^{x}\frac{2\rho (t)}{1+\sqrt{\rho \left( t\right) }}A(\mu
^{+}(t))\int_{0}^{x}\frac{2\rho (\xi )}{1+\sqrt{\rho \left( \xi \right) }}%
\times
\end{equation*}%
\begin{equation*}
\times A(\mu ^{+}(\xi ))\sum_{n=1}^{\infty }\frac{\varphi _{0}\left( \xi
,\lambda _{n}\right) \varphi _{0}\left( t,\lambda _{n}\right) }{\alpha _{n}}%
d\xi dt+
\end{equation*}%
\begin{equation*}
+\int_{0}^{x}\frac{1-\sqrt{\rho \left( 2a-t\right) }}{1+\sqrt{\rho \left(
2a-t\right) }}A(2a-t)\int_{0}^{x}\frac{2\rho (\xi )}{1+\sqrt{\rho \left( \xi
\right) }}\times
\end{equation*}%
\begin{equation*}
\times A(\mu ^{+}(\xi ))\sum_{n=1}^{\infty }\frac{\varphi _{0}\left( \xi
,\lambda _{n}\right) \varphi _{0}\left( t,\lambda _{n}\right) }{\alpha _{n}}%
d\xi dt+
\end{equation*}%
\begin{equation*}
+\int_{0}^{x}\frac{2\rho (t)}{1+\sqrt{\rho \left( t\right) }}A(\mu
^{+}(t))\int_{0}^{x}\frac{1-\sqrt{\rho \left( 2a-\xi \right) }}{1+\sqrt{\rho
\left( 2a-\xi \right) }}\times
\end{equation*}%
\begin{equation*}
\times A(2a-\xi )\sum_{n=1}^{\infty }\frac{\varphi _{0}\left( \xi ,\lambda
_{n}\right) \varphi _{0}\left( t,\lambda _{n}\right) }{\alpha _{n}}d\xi dt+
\end{equation*}%
\begin{equation*}
+\int_{0}^{x}\frac{1-\sqrt{\rho \left( 2a-t\right) }}{1+\sqrt{\rho \left(
2a-t\right) }}A(2a-t)\int_{0}^{x}\frac{1-\sqrt{\rho \left( 2a-\xi \right) }}{%
1+\sqrt{\rho \left( 2a-\xi \right) }}\times
\end{equation*}%
\begin{equation*}
\times A(2a-\xi )\sum_{n=1}^{\infty }\frac{\varphi _{0}\left( \xi ,\lambda
_{n}\right) \varphi _{0}\left( t,\lambda _{n}\right) }{\alpha _{n}}d\xi dt-
\end{equation*}%
\begin{equation*}
-\int_{0}^{x}\frac{2\rho (t)}{1+\sqrt{\rho \left( t\right) }}A(\mu
^{+}(t))\int_{0}^{x}\frac{2\rho (\xi )}{1+\sqrt{\rho \left( \xi \right) }}%
\times
\end{equation*}%
\begin{equation*}
\times A(\mu ^{+}(\xi ))\sum_{n=1}^{\infty }\frac{\varphi _{0}\left( \xi
,\lambda _{n}^{0}\right) \varphi _{0}\left( t,\lambda _{n}^{0}\right) }{%
\alpha _{n}^{0}}d\xi dt-
\end{equation*}%
\begin{equation*}
-\int_{0}^{x}\frac{1-\sqrt{\rho \left( 2a-t\right) }}{1+\sqrt{\rho \left(
2a-t\right) }}A(2a-t)\int_{0}^{x}\frac{2\rho (\xi )}{1+\sqrt{\rho \left( \xi
\right) }}\times
\end{equation*}%
\begin{equation*}
\times A(\mu ^{+}(\xi ))\sum_{n=1}^{\infty }\frac{\varphi _{0}\left( \xi
,\lambda _{n}^{0}\right) \varphi _{0}\left( t,\lambda _{n}^{0}\right) }{%
\alpha _{n}^{0}}d\xi dt-
\end{equation*}%
\begin{equation*}
-\int_{0}^{x}\frac{2\rho (t)}{1+\sqrt{\rho \left( t\right) }}A(\mu
^{+}(t))\int_{0}^{x}\frac{1-\sqrt{\rho \left( 2a-\xi \right) }}{1+\sqrt{\rho
\left( 2a-\xi \right) }}\times
\end{equation*}%
\begin{equation*}
\times A(2a-\xi )\sum_{n=1}^{\infty }\frac{\varphi _{0}\left( \xi ,\lambda
_{n}^{0}\right) \varphi _{0}\left( t,\lambda _{n}^{0}\right) }{\alpha
_{n}^{0}}d\xi dt-
\end{equation*}%
\begin{equation*}
-\int_{0}^{x}\frac{1-\sqrt{\rho \left( 2a-t\right) }}{1+\sqrt{\rho \left(
2a-t\right) }}A(2a-t)\int_{0}^{x}\frac{1-\sqrt{\rho \left( 2a-\xi \right) }}{%
1+\sqrt{\rho \left( 2a-\xi \right) }}\times
\end{equation*}%
\begin{equation*}
\times A(2a-\xi )\sum_{n=1}^{\infty }\frac{\varphi _{0}\left( \xi ,\lambda
_{n}^{0}\right) \varphi _{0}\left( t,\lambda _{n}^{0}\right) }{\alpha
_{n}^{0}}d\xi dt=0.
\end{equation*}%
Thus we obtain 
\begin{equation*}
\int_{0}^{x}\rho (t)\left( \frac{2}{1+\sqrt{\rho \left( t\right) }}A(\mu
^{+}(t))+\frac{1-\sqrt{\rho \left( 2a-t\right) }}{1+\sqrt{\rho \left(
2a-t\right) }}A(2a-t)\right) ^{2}dt+
\end{equation*}%
\begin{equation*}
+\sum_{n=1}^{\infty }\frac{1}{\alpha _{n}}\left( \int_{0}^{x}\rho (t)\left( 
\frac{2}{1+\sqrt{\rho \left( t\right) }}A(\mu ^{+}(t))+\frac{1-\sqrt{\rho
\left( 2a-t\right) }}{1+\sqrt{\rho \left( 2a-t\right) }}A(2a-t)\right)
\varphi _{0}\left( t,\lambda _{n}\right) dt\right) ^{2}-
\end{equation*}%
\begin{equation*}
-\sum_{n=1}^{\infty }\frac{1}{\alpha _{n}^{0}}\left( \int_{0}^{x}\rho
(t)\left( \frac{2}{1+\sqrt{\rho \left( t\right) }}A(\mu ^{+}(t))+\frac{1-%
\sqrt{\rho \left( 2a-t\right) }}{1+\sqrt{\rho \left( 2a-t\right) }}%
A(2a-t)\right) \varphi _{0}\left( t,\lambda _{n}^{0}\right) dt\right) ^{2}=0.
\end{equation*}%
Using the Parseval's equality 
\begin{equation*}
\int_{0}^{x}\rho (t)f^{2}(t)dt=\sum_{n=1}^{\infty }\frac{1}{\alpha _{n}^{0}}%
\left( \int_{0}^{x}\rho (t)f(t)\varphi _{0}\left( t,\lambda _{n}^{0}\right)
dt\right) ^{2}
\end{equation*}%
for the function 
\begin{equation*}
f(t)=\frac{2}{1+\sqrt{\rho \left( t\right) }}A(\mu ^{+}(t))+\frac{1-\sqrt{%
\rho \left( 2a-t\right) }}{1+\sqrt{\rho \left( 2a-t\right) }}A(2a-t)\in
L_{2}(0,x)
\end{equation*}%
we have 
\begin{equation*}
\sum_{n=1}^{\infty }\frac{1}{\alpha _{n}}\left( \int_{0}^{x}\rho (t)\left( 
\frac{2}{1+\sqrt{\rho \left( t\right) }}A(\mu ^{+}(t))+\frac{1-\sqrt{\rho
\left( 2a-t\right) }}{1+\sqrt{\rho \left( 2a-t\right) }}A(2a-t)\right)
\varphi _{0}\left( t,\lambda _{n}\right) dt\right) ^{2}=0
\end{equation*}%
or 
\begin{equation*}
\int_{0}^{x}\rho (t)\left( \frac{2}{1+\sqrt{\rho \left( t\right) }}A(\mu
^{+}(t))+\frac{1-\sqrt{\rho \left( 2a-t\right) }}{1+\sqrt{\rho \left(
2a-t\right) }}A(2a-t)\right) \varphi _{0}\left( t,\lambda _{n}\right) dt=0,%
\quad n\geq 1.
\end{equation*}%
Since the system $\left\{ \varphi _{0}\left( t,\lambda _{n}\right) \right\}
_{n \geq 1}$ is compete in $L_{2,\rho }\left( 0,\pi \right) ,$ we have 
\begin{equation*}
\frac{2}{1+\sqrt{\rho \left( t\right) }}A(\mu ^{+}(t))+\frac{1-\sqrt{\rho
\left( 2a-t\right) }}{1+\sqrt{\rho \left( 2a-t\right) }}A(2a-t)=0,
\end{equation*}%
i.e. $\left( L_{x}A\right) (t)=0,$ where the operator $L_{x}$ is defined by (%
\ref{21}). From invertibility of $L_{x}$ in $L_{2,\rho }\left( 0,\pi \right) 
$ we get $A(x,.)=0.$
\end{proof}

\begin{theorem}
Let $L$ and $\overset{\sim }{L}$ be two boundary value problems and%
\begin{equation*}
\lambda _{n}=\overset{\sim }{\lambda _{n}},\text{ }\alpha _{n}=\overset{\sim 
}{\alpha _{n}},\text{ }\left( n\in \mathbf{Z}\right) .
\end{equation*}%
Then 
\begin{equation*}
q(x)=\overset{\sim }{q}(x)\text{ }x\in \lbrack 0,\pi ].
\end{equation*}
\end{theorem}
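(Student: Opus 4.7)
The plan is to use the main equation (\ref{8}) as the bridge from spectral data to the potential, exploiting the uniqueness result of Theorem 2 together with the reconstruction formula (\ref{5}).

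First, I would observe that the kernels $F_{0}(x,t)$ and $F(x,t)$ appearing in the main equation are, by their definitions (\ref{9}) and (\ref{10}), completely determined by the spectral data $\{\lambda_{n},\alpha_{n}\}_{n\geq 1}$ together with the data $\{\lambda_{n}^{0},\alpha_{n}^{0}\}_{n\geq 1}$ of the auxiliary problem with $q\equiv 0$ (which are fixed and common to $L$ and $\widetilde{L}$, since both share the same weight $\rho$ and the same boundary conditions (\ref{2})). Thus the hypothesis $\lambda_{n}=\widetilde{\lambda}_{n}$ and $\alpha_{n}=\widetilde{\alpha}_{n}$ for all $n$ immediately gives $F_{0}(x,t)=\widetilde{F}_{0}(x,t)$ and $F(x,t)=\widetilde{F}(x,t)$ for every $x,t\in[0,\pi]$. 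Note that the function $\mu^{+}(x)$ in (\ref{7}) depends only on $\rho$, so it too coincides for $L$ and $\widetilde{L}$.

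Next, for each fixed $x\in[0,\pi]$ both $A(x,\cdot)$ and $\widetilde{A}(x,\cdot)$ belong to $L_{2,\rho}(0,\mu^{+}(x))$ and satisfy the main equation (\ref{8}) with the same free term $F(x,t)$ and the same integral kernel $F_{0}(\xi,t)$. By Theorem 2 this equation has a unique solution in $L_{2,\rho}(0,\mu^{+}(x))$, so $A(x,t)=\widetilde{A}(x,t)$ for almost every $t\in(0,\mu^{+}(x))$. In particular, the traces $A(x,\mu^{+}(x))$ and $\widetilde{A}(x,\mu^{+}(x))$ coincide as functions of $x$ on $[0,\pi]$.

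Finally, I would invoke the reconstruction formula (\ref{5}), which expresses $q(x)$ in terms of $\tfrac{d}{dx}A(x,\mu^{+}(x))$ and the weight $\rho(x)$. Since both the diagonal trace and $\rho$ are the same for the two problems, differentiating in $x$ yields $q(x)=\widetilde{q}(x)$ almost everywhere on $[0,\pi]$, which is the required conclusion.

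The only real subtlety lies in verifying that applying Theorem 2 is legitimate for both operators simultaneously: one must check that the argument producing the main equation in the proof of Theorem 1, and the completeness/unique-solvability argument in Theorem 2, use no information about $q$ beyond what is encoded in $\{\lambda_{n},\alpha_{n}\}$ and the fixed auxiliary data. Granted that, the proof is a short and direct chaining of the previous two theorems with (\ref{5}); no further calculation is needed.
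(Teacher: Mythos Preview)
Your proposal is correct and follows exactly the same route as the paper: equality of spectral data forces $F_{0}=\widetilde{F}_{0}$ and $F=\widetilde{F}$ by (\ref{9})--(\ref{10}), the uniqueness theorem for the main equation (Theorem~3.1) then gives $A(x,\cdot)=\widetilde{A}(x,\cdot)$, and (\ref{5}) yields $q=\widetilde{q}$. The paper's proof is in fact terser than yours, omitting the remarks about $\mu^{+}$ and the applicability check for Theorem~3.1.
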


\begin{proof}
According to (\ref{9}) and (\ref{10}) $F_{0}(x,t)=\overset{\sim }{F_{0}}%
(x,t) $ and $F(x,t)=\overset{\sim }{F}(x,t)$. Then from the main
equation (\ref{8}), we have $A(x,t)=\overset{\sim }{A}(x,t)$. It follows
from (\ref{5}) that $q(x)=\overset{\sim }{q}(x)$ $x\in \lbrack 0,\pi ]$.
\end{proof}

\section{Example}
Using \cite{Akh2}, we can transform the main equation (\ref{8}) to the following equation:  
\begin{equation}
\tilde{A}(x,t)+F(x,t)+\int_0^x\tilde{A}(x,\xi)F(x,\xi)d\xi, \quad 0<t<x, \label{23}
\end{equation}
where 
\begin{equation}
F(x,t)=\rho(t)\sum_{n=1}^\infty\left(\frac{\varphi_0(t,\lambda_n)\varphi_0(x,\lambda_n)}{\alpha_n}+\frac{\varphi_0(t,\lambda_n^0)\varphi_0(x,\lambda_n^0)}{\alpha_n^0}\right)\label{24}
\end{equation}
and
\begin{equation*}
\tilde{A}(x,t)=\left\{
\begin{array}{c}
A(x,t) \quad , \quad 0<t<x \\
A(x,t) \quad , \quad 0<t<-\alpha x+\alpha a+a \\
A(x,t)+\frac{1-\alpha}{1+\alpha}A(x,2a-t) \quad ,\quad -\alpha x-\alpha a+a<t<a<x \\
\frac{2\alpha^2}{1+\alpha} A(x,\alpha t-\alpha a+a) \quad , \quad a<t<x .
\end{array}
\right.
\end{equation*}
We assume that $\lambda_n=\lambda_n^0=\frac{\pi}{\mu^+(\pi)}\left(n-\frac{1}{2}\right), \quad 
n \geq 1;\quad \alpha_n=\pi, \quad n>1; \quad \alpha_1=\frac{\pi}{2}; \quad \alpha_n^0=\pi, \quad n\geq 1.$
From the formula (\ref{24}), we obtain
\begin{equation}
F(x,t)=\frac{1}{\pi}\rho(t)\varphi_0(t,\lambda_1)\varphi_0(x,\lambda_1).\label{25}
\end{equation}
Substituting (\ref{25}) into the main equation (\ref{23}) we obtain 
\begin{equation*}
\tilde{A}(x,t)=-\frac{1}{\pi}\rho(t)\varphi_0(t,\lambda_1)\left[\varphi_0(x,\lambda_1)+\int_0^x\tilde{A}(x,\xi)\varphi_0(\xi,\lambda_1)d\xi\right],
\end{equation*}
\begin{equation}
\tilde{A}(x,t)=-\frac{1}{\pi}\rho(t)\varphi_0(t,\lambda_1)L(x),\label{26}
\end{equation}
where 
\begin{equation}
L(x)=\varphi_0(x,\lambda_1)+\int_0^x\tilde{A}(x,\xi)\varphi_0(\xi,\lambda_1)d\xi.\label{27}
\end{equation}
Substituting (\ref{26}) into (\ref{27}) we obtain 
\begin{equation*}
L(x)=\varphi_0(x,\lambda_1)-\frac{1}{\pi}L(x)\int_0^x\rho(\xi)\varphi_0^2(\xi,\lambda_1)d\xi.
\end{equation*}
Using the (\ref{6}), we calculate the integral into last equation. Then, we have
\begin{equation}
L(x)=\left\{
\begin{array}{c}
\varphi_0(x,\lambda_1)-\frac{1}{2\pi}L(x)\left(x+\frac{sin2\lambda_1x}{2\lambda_1}\right) \quad , \quad 0<x<a \\
\varphi_0(x,\lambda_1)-\frac{1}{2\pi}L(x)\Phi(x,\lambda_1) \quad,\quad a<x<\pi,
\end{array}\right.
\label{28}
\end{equation} 
where
\begin{equation*}
\Phi(x,\lambda_1)=\frac{a}{2}+\frac{sin2\lambda_1a}{4\lambda_1}+
\end{equation*}
\begin{equation*}
+\frac{1}{8}(\alpha+1)^2\left[x-a+\frac{1}{2\lambda_1}\left(\frac{sin2\lambda_1(\alpha x-a\alpha+a)}{\alpha}-sin2\lambda_1a\right)\right]+
\end{equation*}
\begin{equation*}
+\frac{1}{2}(\alpha-1)^2\left[x-a-\frac{1}{2\lambda_1}\left(\frac{sin2\lambda_1(-\alpha x+a\alpha+a)}{\alpha}-sin2\lambda_1(2a-x)\right)\right].
\end{equation*}
From (\ref{28}),
\begin{equation}
L(x)=\left\{
\begin{array}{c}
\varphi_0(x,\lambda_1)\left[1+\frac{1}{2\pi}\left(x+\frac{sin2\lambda_1x}{2\lambda_1}\right)\right]^{-1} \quad , \quad  0<x<a \\
\varphi_0(x,\lambda_1)\left[1+\frac{1}{\pi}\Phi(x,\lambda_1)\right]^{-1} \quad , \quad a<x<\pi,
\end{array}
\right.
\label{29}
\end{equation} 
Substituting (\ref{29}) into (\ref{26}) we get
\begin{equation}
\tilde{A}(x,t)=\left\{
\begin{array}{c}
-\frac{1}{\pi}\varphi_0(t,\lambda_1)\varphi_0(x,\lambda_1)\left[1+\frac{1}{2\pi}\left(x+\frac{sin2\lambda_1x}{2\lambda_1}\right)\right]^{-1}  \quad , \quad 0<x<a \\
-\frac{1}{\pi}\rho(t)\varphi_0(t,\lambda_1)\varphi_0(x,\lambda_1)\left[1+\frac{1}{\pi}\Phi(x,lambda_1)\right]^{-1} \quad ,\quad a<x<\pi,
\end{array}\right.
\label{30}
\end{equation}
where $0<t<x$ and $\lambda_1=\frac{\pi}{2\mu^+(\pi)}$.
Thus, we obtain the solution of main equation (\ref{8}). If we use the formula (\ref{5}) then, we obtain the potential $q(x)$.

\section*{Acknowledgements}

This work is supported by the Scientific and Technological Research Council of Turkey (TUBITAK).


\begin{thebibliography}{99}
\bibitem{Tik-Sam} Tikhonov AN, Samarskii AA, Equation of mathematical
physics. Dover Books on Physics and Chemistry, Dover, New York, 1990.

\bibitem{Yur} Yurko VA, Inverse spectral problems and their applications.
Saratov, 2001,497 p. (Russian)

\bibitem{Ras} Rasulov ML. Methods of Contour Integration. Series in
Applied Mathematics and Mechanics, v.3, North-Holland, Amsterdam, 1967.

\bibitem{Akh} Akhtyamov AM. Theory of identification of boundary
conditions and its applications. Fizmatlit, Moscow, 2009.

\bibitem{Akh-Mou} Akhtyamov AM. Mouftakhov AV. Identification of
boundary conditions using natural frequencies. Inverse Problems in Science
and Engineering, 2004, 12(4), 393--408.

\bibitem{And} Anderssen RS. The effect of discontinuities in destiny and
shear velocity on the asymptotic overtone structure of torsional
eigenfrequencies of the earth., Geophys. J. R. Astr. Soc., 1977, 50, 303-309.

\bibitem{Hao} Hao DN. Methods for inverse heat conduction problems. Peter Lang Verlag,
Frankfurt/Main, Bern, New York, Paris, 1998.

\bibitem{Sed} Sedipkov AA. The inverse spectral problem for the
Sturm--Liouville operator with discontinuous potential. J. Inverse Ill-Posed
Probl., 2012, 20, 139--167.

\bibitem{Mar1} Marchenko VA. Spektral theory of Sturm- Liouville
operators. Naukova Dumka, Kiev, 1977, 331 p. (Russian)

\bibitem{Mar2} Marchenko VA. Strum-Liouville Operators and Their
Aplications. Trans. from the Russian by A. Iacob, Birkhauser Verlag, Basel,
Boston, Stuttgard, 1986.

\bibitem{Lev-Sar} Levitan BM, Sargsjan IS. Sturm- Liouville and Dirac
Operators. Kluwer Academic Publishers Group, Dordrecht, 1991.

\bibitem{Lev} Levitan BM. Inverse Sturm-Liouville Problems. Translated
from the Russian by O. E mov. VNU Science Press BV, Utrecht. 1987.

\bibitem{Fre-Yur} Freiling G, Yurko V. Inverse Sturm-Liouville problems
and their applications. Nova Science Publishers, INC. 2008.

\bibitem{Bea-Dei-Tom} Beals R, Deift P, Tomei C. Direct and inverse
scattering on the line. vol.28 of Mathematical Surveys and Monographs,
American Mathematical Society, Providence, RI, USA, 1988.

\bibitem{Lev-Gas} Levitan BM, Gasymov MG. Determination of
differantial operator by two spektra, Uspekhi mat. Nauk, 1964,v.19
issue 2, pp.3-63. (Russian)

\bibitem{Bor} Borg G. Eine Umkehrung der Sturm-Liouvilleschen
Eigenwertaufgabe. Acta Math. 78 1946, 1-96.

\bibitem{Pos-Tru} Poschel J, Trubowitz E. Inverse Spectral Theory.
Academic Press, New York, 1987.

\bibitem{Hal} Hald OH. Discontinuous inverse eigenvalue problems. Comm. Pure Appl. Math. 1984. 37. 539-577

\bibitem{Car} Carlson R. An inverse spectral problem for Sturm- Liouville
operators with discontinuous coefficients. Pro. Amer. Math. Soc. 1994.
120(2). 5-9.

\bibitem{Nab-Ami} Nabiev AA, Amirov RK. On the boundary value
problem for the Sturm-Liouville equation with the discontinuous coefficient.
Math. Methods Appl. Sci., 2012, doi:10.1002/mma.2714.

\bibitem{Kar-Mam} Karahan D, Mamedov KhR. Uniqueness of the solution of
the inverse problem for one class of Sturm-Liouville operator, Proceedings
of IMM of NAS of Azerbaijan, v. 40, Special Issue, 2014, p. 233-244.

\bibitem{Ali-Yak} Aliev BA, Yakubov YS. Solvability of boundary value
problems for second-order elliptic differential-operator equations with a
spectral parameter and with a discontinuous coefficient at the highest
derivative. Differential Equations, 2014, 50(4), 464--475.

\bibitem{Alt-Kad-Muk} Altinisik N, Kadakal M, Mukhtarov O. Eigenvalues
and eigenfunctions of discontinuous Sturm-Liouville problems with
eigenparameter dependent boundary conditions. Acta Math. Hung., 2004,
102(1--2), 159--175.

\bibitem{Akh-Hus2} Akhmedova EN, Huseynov HM. The main equation of the
inverse Sturm-Liouville problem with discontinuous coefficients. Proceedings
of IMM of NAS of Azerbaijan, v. XXVI (XXXIV), Baku, 2007, pp. 17-32.



\bibitem{Akh} Akhmedova EN. On representation of solution of
Sturm-Liouville equation with discontinuous coefficients. Proceedings of
IMM of NAS of Azerbaijan, v.XVI(XXIV), 2002, p. 5-9.

\bibitem{Lus} Lusternik LA, Sobolev VI. Elements of the functional
analysis. Moscow, Nauka, 1965, 520 p. (Russian)

\bibitem{Akh2} Akhmedova EN, Huseynov HM. On inverse problem for Sturm-Liouville operator with discontinuous coefficients. Matematika, YDK 517.984, 2010, p. 3-9. (Russian)
\end{thebibliography}
\end{document}